\documentclass{amsart}
\usepackage[russian,english]{babel}
\usepackage{amsmath,amssymb,amsthm,amsfonts}
\usepackage[matrix,arrow]{xy}

\usepackage{dingbat}


\numberwithin{equation}{section}
\parindent0em
\parskip1em


\newcommand{\bC}{{\mathbb C}}

\newcommand{\bK}{{\mathbb K}}

\newcommand{\bN}{{\mathbb N}}

\newcommand{\bR}{{\mathbb R}}

\newcommand{\bZ}{{\mathbb Z}}



\newcommand{\tit}{\textit}
\newcommand{\sub}{\subset}
\newcommand{\wtilde}{\widetilde}
\newcommand{\bs}{\backslash}


\newcommand{\SH}{\operatorname{\mathit{SH}}}

\newcommand{\iso}{\cong}


\newtheorem{thm}{Theorem}[section]
\newtheorem{theorem}[thm]{Theorem}
\newtheorem{property}[thm]{Property}

\newtheorem{definition}[thm]{Definition}

\newtheorem{remark}[thm]{Remark}

\newtheorem{example}[thm]{Example}

\newtheorem{lemma}[thm]{Lemma}
\newtheorem*{claim}{Claim}


\begin{document}
\title[Non-standard Symplectic Structures via Symplectic Cohomology]{Non-standard Symplectic Structures via Symplectic Cohomology}
\author[Tran]{Dustin Tran}
\date{May 11, 2012}

\begin{abstract} We examine how symplectic cohomology may be used as an invariant on symplectic structures, and investigate the non-uniqueness of these structures on Liouville domains, a field which has seen much development in the past decade. Notably, we prove the existence of infinitely many non-standard symplectic structures on finite type Liouville manifolds for dimensions $n\geq 6$. To do this, we build up notions of Liouville domains, Lefschetz fibrations, and symplectic cohomology.\end{abstract}
\maketitle

\section{Introduction}

In 1985, Gromov \cite{G} illustrated a non-standard exact symplectic structure on $\bR^{2n}$ for $n>1$. However, the standard symplectic structure is known to be Liouville, whereas Gromov's exotic structure is not. The first exotic structure proven to be Liouville was not discovered until 2005, by Seidel-Smith \cite{S-S}, demonstrating its existence on $\bR^{2n}$ for any even $n>4$. This was later expanded upon by Mc-Lean \cite{M-L}, covering $\bR^{2n}$ for all $n>4$. Most recently, Abouzaid-Seidel \cite{A-S} proved the existence of infinitely many (distinct) exotic structures on smooth complex affine varieties of real dimension $n\geq6$. We note that by choosing a particular K\"{a}hler form, there exists a canonical correspondence between smooth complex affine varieties and finite type Liouville manifolds.

This paper seeks to reproduce the most recent result, though to a slightly more accessible audience. In light of being terse as well, this paper must also shorten remarks regarding any deeper results or more comprehensive theorems. That is, while Abouzaid-Seidel \cite{A-S} produces more than the above result, this paper sets to prove only that.

We use symplectic cohomology and build up its necessary tools, while offering references to more drawn-out, extensive arguments and their corresponding results. To shorten the proof remarkably, this paper also assumes several deep results and their corresponding properties. To be precise, we omit the proof of any result necessitating wrapped Floer cohomology, which would otherwise require extensive elaboration and introductions to more complicated techniques. Such proofs will be cited accordingly.

\section{Manifolds, Handles, and Fibrations}
\subsection{Liouville domains and manifolds.}
We start with some preliminary definitions on the classes of symplectic manifolds used in this paper. Recall that if a symplectic manifold $M^n$ is exact, then its symplectic form $\omega_M$ has a corresponding \tit{primitive} $\theta_M$. That is, there exists a $1$-form $\theta_M$ satisfying $\omega_M=d\theta_M$. The \tit{Liouville vector field} $Z_M$ is the vector field which is $\omega_M$-dual to $\theta_M$, i.e., $\omega_M(Z_M,\cdot)=\theta_M.$

\begin{definition}A Liouville domain is a compact exact symplectic manifold with boundary $M^{2n}$, such that $Z_M$ points strictly outwards along $\partial M$. \end{definition}

Given a manifold $M^{2n}$, $h:M\to\bR$ is an \tit{exhausting function} if it is bounded below and proper (continuous such that the inverse image of compact subsets is compact). An exact symplectic structure is \tit{complete} if the flow of $Z_M$ exists for all time.
\begin{definition}A Liouville manifold is a complete exact symplectic manifold $M^{2n}$ which admits an exhausting function $h:M\to\bR$ with the following property:

There exists a sequence $\{c_k\}_{k\in\bN}$ for $c_k\in\bR$, approaching $+\infty$ as $k\to\infty$, such that $dh(Z_M)>0$ along $h^{-1}(c_k)$.\end{definition}
We say that a Liouville manifold $M^{2n}$ has \tit{finite type} if $dh(Z_M)>0$ outside a compact subset $C\sub M$. Note that for any Liouville manifold $M^{2n}$, its corresponding sublevel sets yield an exhaustion of $M$ by Liouville domains.

As expected, any Liouville domain can be canonically extended to a finite type Liouville manifold by attaching an infinite cone to the boundary. Conversely, any finite type Liouville manifold can be truncated to a Liouville domain which is a sufficiently large sublevel set of the exhausting function.

\begin{example}\label{example1} Consider $(\bC^n,\omega_{std})$, where $\omega_{std}=\sum_j dx_j\wedge dy_j$, the $z_j=x_j+iy_j$ being the standard coordinates on $\bC^n$. Then the Liouville vector field $Z_{\bC^n}$ is $\sum r_j\frac{\partial }{\partial r_j}$, where $(r_j,\theta_j)$ denote the polar coordinates for $z_j$. It is easy to see that $\bC^n$ is a Liouville domain.\end{example}
\begin{example}\label{example2} Let $(M^n,\omega_{std})$ be a symplectic manifold where $\omega_{\text{std}}=d\lambda=\sum dp\wedge dq$ and $T^*M$ its cotangent bundle. Then $Z_M=\sum p\frac{\partial}{\partial p}$ generates the Liouville flow of "radial rescaling." Choosing a metric on $M$ makes the corresponding unit disc bundle a Liouville domain.  \end{example}

Now that we have three particular classes of symplectic manifolds to consider, the next question would be whether some notion of an isomorphism between two manifolds of these types exists.
\begin{definition}Two Liouville domains $M$, $\wtilde{M}$ are deformation equivalent if there exists a diffeomorphism $\phi:M\to \wtilde{M}$ and a one-parameter family of Liouville domain structures on $M$, which interpolate between $(\omega_M,\theta_M)$ and $(\phi^*\omega_{\wtilde{M}},\phi^*\theta_{\wtilde{M}})$.\end{definition}

\begin{definition} Let $M$ and $\wtilde{M}$ be Liouville manifolds. $\phi:M\to \wtilde{M}$ is an exact symplectomorphism if it is a diffeomorphism such that $\phi^*\theta_{\wtilde{M}}-\theta_M$ is an exact 1-form.\end{definition}
Strengthening the above notion, if $M$ and $\wtilde{M}$ are Liouville manifolds of finite type, we say that $\phi:M\to \wtilde{M}$ is a \tit{strictly exact symplectomorphism} if $\phi$ is a diffeomorphism and $\phi^*\theta_{\wtilde{M}}-\theta_M$ is not only an exact 1-form, but the derivative of a compactly supported function.

Note that each of these symplectomorphisms is also called a "Liouville isomorphism" by convention. Because we consider three distinct classes of manifolds that are all "Liouville," the term "Liouville isomorphism" is broad and less specific, depending on the context of the Liouville manifold considered.

\begin{example} Following Example \ref{example1}, all Liouville domains constructed in this manner are deformation equivalent, with corresponding completions symplectomorphic to $T^*M$ itself. \end{example}

If two Liouville domains are deformation equivalent, then their natural extensions to finite type Liouville manifolds are strictly exact symplectomorphic. The converse is true as well, when restricting Liouville manifolds of finite type to Liouville domains via the same choice.

\subsection{Weinstein handles.}
We now define a particular class of Liouville manifolds that will prove useful when extending Liouville domains to larger ones.
\begin{definition} A Weinstein manifold is a complete exact symplectic manifold, which admits an exhausting Morse function $h:M\to\bR$ with the following properties:
\begin{itemize}
\item At each critical point $x$ of $h$, $Z_M = 0$, and the quadratic form on $TM_x$ given by $X \mapsto D^2 h(X,DZ_M(X))$ is positive definite.
\item Outside the critical point set of $h$, $dh(Z_M) > 0$.
\end{itemize}\end{definition}

Readers interested in more information about Weinstein manifolds may consult \cite{W}, \cite{A}. We are only concerned with Weinstein handle attachment, i.e., attaching handles which satisfy the above Weinstein manifold conditions. Recall that for a manifold $M^n$ which admits a smooth embedding $\epsilon:S^{k-1} \hookrightarrow \partial M$, a \tit{$k$-handle attachment} is the process of gluing $H^k:=D^k\times D^{n-k}$ to the boundary $\partial M$, where the framing data defines the particular gluing construction.

\begin{property}\label{pr:2.5} Let $M$ be a Liouville domain and suppose there exists an embedding $\epsilon:S^{k-1} \hookrightarrow \partial M$ such that the pullback of $\theta_{M}$ to $S^{k-1}$ vanishes. Depending on a choice of framing data, we can attach a Weinstein $k$-handle to the boundary $\partial M$, which defines a larger Liouville domain $\wtilde{M} \supset M$.\end{property}

For example, suppose that a Liouville domain $M$ has two connected components $M_1,$ $M_2$. Taking a point on each boundary $\partial M_i$, the two points together form $S^{0}$, so $k=1$ and the embedding is clearly evident. We attach a Weinstein 1-handle to form a Liouville domain $\wtilde{M}\supset M$. This is called taking the \tit{boundary connect sum} of the two components, which will be useful later in the proof.

\subsection{Lefschetz fibrations.} We introduce another concept useful for constructing the non-standard symplectic structures. We later show that given a Lefschetz fibration $E$, we can construct another one $\wtilde{E}$ which is almost symplectomorphic to $E$ and its symplectic cohomology $SH^*(\wtilde E)$ vanishes (see Theorem \ref{th:3.1}). For a more comprehensive introduction to Lefschetz fibrations, one may consult \cite{M-S}, \cite{S}.
\begin{definition} Let $E^{2n+2}$ be an exact symplectic manifold. An exact (symplectic) Lefschetz fibration with fibre $M$ is a smooth map $\pi_E: E \rightarrow \bC$, which satisfies the following properties:\begin{itemize}\parskip1em

\item At each regular point, $\mathrm{ker}(D\pi_E) \subset TE$ is a symplectic subspace.

\item Lefschetz condition: At each critical point, it has a neighborhood $U$ such that our fibration is the map
\begin{equation} \label{eq3}\pi_E|_U:\bC^{n+1} \longrightarrow \bC\end{equation}
$$\quad z=(z_1,...,z_{n+1}) \longmapsto z_1^2 + \cdots + z_{n+1}^2 + constant.$$
\end{itemize}
\end{definition}
To be technical, since both the fibres and base are non-compact, we also need to impose another condition on Lefschetz fibrations, in order to control the geometry near infinity. However, since this is not a problem in our later constructions, we ignore the condition. Informally, a Lefschetz fibration can be thought of as a complex analogue of a Morse function.

\begin{definition}
A vanishing path on a Lefschetz fibration $\pi_E: E \to \bC$ is a properly embedded path $\beta: \bR^+ \to \bC$ such that $\beta(0)$ is a critical value of $\pi_E$ and $\beta(t)$ is regular value for all $t>0$. Moreover, for sufficiently large $t_0$ and some angle $a_\beta\in S^1\sub \bC$,
\begin{equation} \label{eq4}
\beta(t) = t a_\beta \text{ for all $t \geq t_0$}.\end{equation}\end{definition}
Given a vanishing path $\beta$, the \tit{Lefschetz thimble} $\Delta_\beta$ is a properly embedded Lagrangian submanifold of $E$ such that $\pi_E(\Delta_\beta)=\beta(\bR^+)$.

Suppose we are given a Lefschetz fibration $E^{2n+2}$ and a basis of vanishing paths $\{\gamma_1,\dots,\gamma_m\}$. Note that there are $m$ critical values corresponding to the $\gamma_k$'s,  and that each $\gamma_k\to\infty$ as $t\to\infty$. Now take the fibre $E_z$ for $z$ real and sufficiently large, and (canonically) identify it with $M$. Then the intersections of the Lefschetz thimbles $\{\Delta_{\gamma_1},...,\Delta_{\gamma_m}\}$ with $E_z$ give rise to a set of vanishing cycles $\{V_1,\dots,V_m\}$ in the fibre $M$, forming a basis in $M$. Thus, we have established the following construction: given a Lefschetz fibration with a basis of vanishing paths, we can construct a corresponding basis of vanishing cycles on each fibre.

\begin{definition}Given a manifold $M^{2n}$, a Lagrangian sphere $V$ is a Lagrangian submanifold $V\sub M$ (isotropic of dimension $n$), which is exact and admits a diffeomorphism $\phi_i:S^n \rightarrow V$ that is unique up to isotopy and composition with elements of $O(n+1)$. \end{definition}

Indeed, in the above construction, the corresponding vanishing cycles are Lagrangian spheres.

We are primarily concerned in the converse construction, with respect to Liouville domains. That is, let $M^{2n}$ be a Liouville domain and suppose there exists a set of Lagrangian spheres $\{V_1,...,V_m\}$. We can construct an exact Lefschetz fibration over the disc with fibre $M$, such that the $V_i$ form a basis of vanishing cycles (see \cite{S2} for the construction). The corresponding total space is a Liouville domain $E^{2n+2}$. Furthermore, $E$ comes with a set of Lefschetz thimbles $\{\Delta_1,\dots,\Delta_m\}$ that are determined by the set of Lagrangian spheres $\{V_1,\dots,V_m\}$.

\section{Symplectic Cohomology}
We now introduce the main invariant we will use on these three classes of manifolds, first considering Liouville domains. We denote $SH^*$ as short-hand for symplectic cohomology. For a more comprehensive introduction to symplectic cohomology, one may consult Seidel's survey article \cite{S}. Fix $\bK$ to be the desired field of coefficients.
\begin{definition}If $M$ is a Liouville domain of dimension $2n$, then the symplectic cohomology of $M$ with $\bK$-coefficients, $SH^*(M)$, is a $\bZ/2$-graded $\bK$-vector space with a natural $\bZ/2$-graded map
\begin{equation}H^{*+n}(M;\bK)\to SH^*(M).\end{equation}\end{definition}

Given two Liouville domains $U$, $M$ where $\dim U=\dim M$, and an embedding $\epsilon:U\to M$ such that $\epsilon^*\theta_M-c\theta_U$ is exact for some constant $c>0$, then Viterbo's construction \cite{V} assigns to each embedding a pull-back restriction map
\begin{equation}SH^*(\epsilon): SH^*(M)\to SH^*(U).\end{equation}
This is homotopy invariant within the space of all such embeddings, and functorial with respect to composition of embeddings. More generally, by a parametrization argument of Viterbo's construction, this is invariant under isotopies of embeddings, within the same class, so it must be invariant under deformation equivalent Liouville domains.

We now aim to understand symplectic cohomology on our two other classes of manifolds. In particular, with the natural correspondence between Liouville domains and Liouville manifolds of finite type, we can similarly define symplectic cohomology on the latter. And for the same reasons as invariance under deformation equivalent Liouville domains, symplectic cohomology must also be invariant under strictly exact symplectomorphisms (between finite type Liouville manifolds).

We aim to generalize this feature for Liouville manifolds of finite type to arbitrary Liouville manifolds.
\begin{definition}If $M$ is a Liouville manifold, then
\begin{equation}\label{eq2} SH^*(M)=\lim_{\leftarrow}SH^*(U),\end{equation}
where the limit is over all pairs $(U,k)$ such that the Liouville vector field $Z_M$ satisfying $\theta_M+dk$ points strictly outwards along $\partial U$, where $U\sub M$ is a compact codimension zero submanifold and $k$ is a function on $M$.  \end{definition}
By \eqref{eq2}, it is easy to see that $SH^*$ is invariant under exact symplectomorphisms between Liouville manifolds. And as an elementary consequence, $SH^*$ must also be invariant under exact symplectomorphisms between Liouville manifolds of finite type. In fact, this can be generalized even further.

\begin{property} \label{pr:2.12}$SH^*$ is invariant under symplectomorphisms between Liouville manifolds of finite type. \end{property}
\begin{proof}  Given a symplectomorphism $\psi:(M,\theta_M)\to(\wtilde{M},\theta_{\wtilde{M}})$ between two finite type Liouville manifolds, we aim to deform $\psi$ to an exact one.
Note that any finite type Liouville manifold has a cylindrical end. That is, there exists a compact set $C\sub M$ such that $M\bs C$ is exact symplectomorphic to $(A\times[0,\infty],r\alpha)$, where $r$ is the coordinate for $[0,\infty]$ and $\alpha$ is a contact form on $A:=\partial M$.

Now consider $\theta:= \psi^*\theta_{\wtilde{M}}-\theta_M$ and restrict the closed 1-form $\theta$ to the end $E=[0,\infty)\times A$. Then $\theta|_E$ can be represented as $\pi^*\theta|_A+dh$, where $\pi:E\to A$ is the projection and $h$ is a real-valued smooth function. Let $f:[0,\infty)\to[0,1]$ be a bump function which is $1$ near $0$ and $0$ on $[1,\infty)$, and
\begin{equation}\wtilde\theta:=\pi^*\theta|_A+d(f(s) h),\hspace{5mm} \wtilde h:= (1-f(s))h.\end{equation}

Let $V$ be the vector field dual to the closed 1-form $-\wtilde\theta $ with respect to $d\theta_M$. Then by evaluating $V$ on $[1,\infty]\times A$, it follows that $V$ must be represented as some $c\wtilde{V}$, where $\wtilde{V}$ is a vector field that does not depend on the constant $c>0$. Thus, $V$ integrates to an isotopy $\{\phi_t\}_{0\leq t\leq 1}: (M,\theta_M)\to(\wtilde M,\theta_{\wtilde{M}})$ such that $\phi_0=id$. Also note that the Lie derivative $L_V\theta_M=-\wtilde\theta+d(\theta_M(V))$. We conclude that
$$\phi:=(\psi\circ\phi_1)^*\theta_{\wtilde{M}}=\phi_1^*\theta_M+\phi_1^*\wtilde\theta + d\wtilde h\circ\phi_1=\theta_M+dg$$
for some smooth function $g$.
\end{proof}
\begin{remark} It is not known whether this is also true between Liouville manifolds in general. An analogous argument as above does not work, since the deformation works only when there there is a cylindrical end, i.e., at least one of the Liouville manifolds is of finite type. \end{remark}

We also note that there is a canonical map from ordinary cohomology to $SH^*$ compatible with the ring structures, providing symplectic cohomology with a large scale of useful properties. In particular, $SH^*$ must be a unital graded commutative ring.

\section{Lefschetz Fibration Reconstruction}
We now use our tools coming from Weinstein handle attachments and properties of Lefschetz fibrations to construct a Lefschetz fibration which is almost symplectomorphic to the original Lefschetz fibration and has vanishing symplectic cohomology groups.

Recall that for $M, \wtilde{M}$ symplectic manifolds, an \tit{almost symplectomorphism} is a diffeomorphism $\phi:M\to\wtilde{M}$ together with a one-parameter family $\{\omega_t\}_{0\leq t\leq1}$ of symplectic structures which interpolate between $\omega_0=\omega_M$ and $\omega_1=\phi^*\omega_{\wtilde{M}}.$

Let $E^{2n+2}$ be the total space of an exact Lefschetz fibration with fibre $M$ and vanishing cycles $\{V_1,\dots,V_m\}$. Further, suppose that inside $M$, there are Lagrangian discs $\{W_1,...,W_m\}$ such that for all $1\leq i,j\leq m$, $W_i$ intersects $V_j$ transversally and in a single point.

Now attach a Weinstein handle to the boundary of the first Lagrangian disc $\partial W_1$, obtaining a new Liouville domain $M'$, and
separate $\partial W_1$ into its two natural hemispheres. That is, decompose $\partial W_1$ into two codimension zero submanifolds $U_-,U_+$, which intersect on their common boundary $\partial U_+=\partial U_-$ and have distinct Euler characteristic values. Consider a smooth function $g: S^{n-1}\longrightarrow \bR$
such that
\begin{itemize}\item $g^{-1}(0)=\{p:p\in U_+\cap U_-\}$ and $0$ is a regular value,
 \item $g$ is strictly positive on the interior of $U_+$ and strictly negative on the interior of $U_-$.
 \end{itemize}
 Then $g$ extends to a smooth function $\wtilde g: \bR^n \longrightarrow \bR$ such that $\wtilde g(tp) = t^2 \wtilde g(p)$ for all $|p| \geq 1/2$ and $t \geq 1$. Now consider
 \begin{equation}\mathrm{Graph}(d\wtilde g)\sub T^*\bR^n\iso \bR^{2n},\end{equation} and intersect it with $D^{2n}$, defining the result as
$G:=\mathrm{Graph}(d\wtilde g)\cap D^{2n}\sub D^{2n}$.

\begin{claim}\label{le:4.1} $\partial G$ is disjoint from the submanifold $\{0\}^n\times S^{n-1}$ near where the handle is attached. \end{claim}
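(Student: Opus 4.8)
The plan is to reduce the claimed disjointness to the statement that $d\wtilde g$ has no zeros on the unit sphere $S^{n-1}\sub\bR^n$, and then to verify this nonvanishing directly from the two defining properties of $g$. First I would unwind what it means for a point to lie in $\partial G\cap(\{0\}^n\times S^{n-1})$. Under the identification $T^*\bR^n\iso\bR^{2n}$, the submanifold $\{0\}^n\times S^{n-1}$ is the zero section over the unit sphere, i.e.\ the attaching sphere $\partial W_1$: its points have vanishing cotangent coordinate and base coordinate $q$ with $|q|=1$. Since $G=\mathrm{Graph}(d\wtilde g)\cap D^{2n}$, the point of the graph over such a $q$ is $(q,d\wtilde g(q))$, and it lies in $\{0\}^n\times S^{n-1}$ exactly when $d\wtilde g(q)=0$. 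Any such point automatically satisfies $|q|^2+|d\wtilde g(q)|^2=1$, so it lies on $\partial D^{2n}$ and hence in $\partial G$. Therefore $\partial G\cap(\{0\}^n\times S^{n-1})\neq\emptyset$ if and only if $d\wtilde g$ vanishes somewhere on $S^{n-1}$, and it suffices to rule this out.

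Next I would compute $d\wtilde g$ along $S^{n-1}$ by splitting it into its radial and tangential parts at a point $q$ with $|q|=1$. For the radial part, the homogeneity relation $\wtilde g(tp)=t^2\wtilde g(p)$, which applies at $|p|=1\ge 1/2$ and $t\ge1$, gives upon differentiating in $t$ at $t=1$ that the radial derivative of $\wtilde g$ at $q$ equals $2\wtilde g(q)=2g(q)$. For the tangential part, since $\wtilde g$ restricts to $g$ on $S^{n-1}$, the component of $d\wtilde g(q)$ tangent to the sphere is exactly the intrinsic differential $dg_q$ of $g:S^{n-1}\to\bR$.

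Finally I would run the dichotomy furnished by the hypotheses on $g$. If $g(q)\neq0$, then the radial component $2g(q)$ is nonzero, so $d\wtilde g(q)\neq0$. If $g(q)=0$, then $q$ lies on the equator $U_+\cap U_-=g^{-1}(0)$; because $0$ is a regular value of $g$, we have $dg_q\neq0$, so the tangential component of $d\wtilde g(q)$ is nonzero and again $d\wtilde g(q)\neq0$. Since the sign conditions on $g$ make these two cases exhaust $S^{n-1}$, the differential $d\wtilde g$ is nowhere zero on the unit sphere, which is the required statement; the qualifier ``near where the handle is attached'' simply records that the entire argument takes place inside the local Weinstein model $D^{2n}$. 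The one step requiring care is the radial computation: I must confirm that the homogeneity hypothesis is genuinely in force at radius exactly $1$ (it is, since $1\ge 1/2$), so that Euler's relation legitimately identifies the radial derivative with $2g$, after which the complementary sign and regular-value conditions on $g$ close the argument with no further analysis.
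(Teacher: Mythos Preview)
The paper does not actually prove this claim: its ``proof'' is a placeholder deferring to \cite{A-S}. Your argument is correct and is precisely the one given there: reduce the disjointness to the nonvanishing of $d\wtilde g$ on $S^{n-1}$, then split $d\wtilde g(q)$ into its radial part $2g(q)$ (via the degree-$2$ homogeneity) and its tangential part $dg_q$, and invoke the sign condition and the regular-value condition on $g$ respectively. One small refinement you could add: the homogeneity $\wtilde g(tp)=t^2\wtilde g(p)$ for $|p|\geq 1/2$, $t\geq 1$ actually forces $\wtilde g(sq)=s^2 g(q)$ for all $s\in[1/2,\infty)$ when $|q|=1$ (apply it with $p=sq$, $t=1/s$ for $s\leq 1$), so the radial derivative is genuinely two-sided and there is no one-sided subtlety. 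Your identification of $\{0\}^n\times S^{n-1}$ with the attaching sphere $\partial W_1$ (zero cotangent coordinate, unit base coordinate) is the intended reading and is what makes the subsequent sentence ``$\partial G\subset\partial M'$'' go through.
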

\begin{proof} (will prove this in final draft, but for now, see \cite{A-S} for details of the proof.)
\end{proof}

Then by the above claim, $\partial G\sub \partial M'$, so long as the size of the handle $\epsilon$ is chosen to be sufficiently small. This grants us another Lagrangian disc $W_1'=G$ in $M'$ with boundary $\partial W_1' = \partial G$. Now attach yet another Weinstein handle to $W_1'$, denoting the resulting manifold as $M''$. By construction, $M''$ contains a Lagrangian sphere obtained by gluing together $W_1'$ and the disc of the second handle. And we also have a Lagrangian disc $W_1''$ which is the disc of that second handle.

Intuitively, we are attaching a handle to $\partial M$ and then attaching a second handle to the boundary of the first handle, in a particularly nice way that allows us to hold several 'nice' properties.

Now we iterate this process for every $W_i$, for $1 \leq i \leq m$, denoting the final result as $\wtilde{M}\supset M$.
By construction, $\wtilde{M}$ holds a new set of vanishing cycles $\{\wtilde{V}_1,...,\wtilde{V}_{3m}\}$, which correspond to the new set of Lagrangian spheres $\{L_1,L_1',L_1'',...,L_m,L_m',L_m''\}.$
Using these vanishing cycles, we can now build a new Lefschetz fibration, whose total space will be denoted by $\wtilde{E}$, which has Lefschetz thimbles $(\wtilde{\Delta}_1,\dots,\wtilde{\Delta}_{3m})$.

\begin{theorem} \label{th:3.1}
When $E$ is of dimension $2n+2\geq 6$, the resulting $\wtilde{E}$ is almost symplectomorphic to $E$ and $\SH^*(\wtilde{E})=0$ on all coefficient fields $\bK$.
\end{theorem}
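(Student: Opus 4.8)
The plan is to prove the two assertions separately: first that $\wtilde E$ is almost symplectomorphic to $E$, and then that $\SH^*(\wtilde E)=0$.

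For the almost symplectomorphism, I would argue that passing from the vanishing cycles $\{V_1,\dots,V_m\}$ of $E$ to the enlarged collection $\{L_1,L_1',L_1'',\dots,L_m,L_m',L_m''\}$ of $\wtilde E$ only inserts \emph{cancelling pairs} of critical points. Concretely, for each index $i$ the two spheres $L_i'$ and $L_i''$ are produced by attaching a Weinstein handle to $\partial W_i'$ after $W_i'$ itself arose as the belt disc $G=\mathrm{Graph}(d\wtilde g)\cap D^{2n}$ of the previous handle; by Claim \ref{le:4.1} the relevant boundary spheres are positioned so that $L_i'$ and $L_i''$ meet transversally in exactly one point, while $L_i$ plays the role of the original $V_i$ up to deformation. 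A consecutive pair of vanishing cycles intersecting transversally in a single point is precisely the Lefschetz-fibration analogue of a cancelling pair of Weinstein handles. The first step is therefore to show that introducing such a pair does not alter the diffeomorphism type of the total space and changes the symplectic form only within a one-parameter family of symplectic structures; composing these deformations over all $i$ yields an almost symplectomorphism $\wtilde E\to E$ in the sense recalled above.

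Here the dimension hypothesis $2n+2\geq 6$ does the real work: the smooth cancellation of the handle pair, and the interpolation of the two Weinstein structures, rely on a Whitney-trick / handle-slide argument that is valid only once the total space has dimension at least $6$ (equivalently, once the attaching spheres $S^{n-1}$ are connected and lie in the stable range). I would verify that the framing data attached to $L_i'$ and $L_i''$ match up so that the pair genuinely cancels, and then assemble the family $\{\omega_t\}_{0\leq t\leq 1}$ of symplectic forms interpolating between $\omega_E$ and $\phi^*\omega_{\wtilde E}$.

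For the vanishing of symplectic cohomology I would exploit that, as noted after Property \ref{pr:2.12}, $\SH^*$ is a unital graded-commutative ring; hence it suffices to prove that the unit $1\in \SH^0(\wtilde E)$ equals zero, which forces $\SH^*(\wtilde E)=0$ by the ring axioms. To kill the unit I would pass to the wrapped theory: the Lefschetz thimbles $\{\wtilde\Delta_1,\dots,\wtilde\Delta_{3m}\}$ generate the wrapped Fukaya category of $\wtilde E$, and the image of the unit under the closed--open map is detected by the wrapped Floer cohomology $\HW^*$ of these thimbles. The enlarged, cancelling configuration of vanishing cycles is arranged exactly so that this map sends $1$ to $0$; in keeping with the conventions of this paper, the underlying $\HW^*$-computation is the one input we take from Abouzaid--Seidel \cite{A-S} rather than reproduce. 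The main obstacle is this last step: the almost-symplectomorphism is in essence a smooth-topology statement dressed up with a symplectic deformation and is relatively soft, whereas showing $1=0$ in $\SH^0(\wtilde E)$ is where all the geometric content lives, resting on the generation result for thimbles and the acyclicity of the wrapped category produced by the recombination. I would therefore isolate exactly the statement needed — that the closed--open map annihilates the unit — record it as a cited lemma, and deduce the theorem formally from it together with the ring structure on $\SH^*$.
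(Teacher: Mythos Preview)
The paper does not actually prove Theorem~\ref{th:3.1}: it simply refers the reader to \cite{A-S}, and the only information it gives about the argument is the subsequent Remark, which singles out wrapped Floer cohomology, Dehn twists, and Hurwitz moves as the essential ingredients. Your outline for $\SH^*(\wtilde E)=0$ is consistent with that: reducing to $1=0$ via the ring structure, passing through the closed--open map to the wrapped category generated by the thimbles, and citing \cite{A-S} for the acyclicity computation is exactly in the spirit of what the paper does (and of its stated policy of not reproducing wrapped Floer arguments).

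The almost-symplectomorphism half, however, has a genuine gap. You treat $L_i'$ and $L_i''$ as a ``cancelling pair of Weinstein handles,'' but both are vanishing cycles of the Lefschetz fibration and hence both contribute handles of the \emph{same} index $n{+}1$ to the total space; Weinstein cancellation pairs indices $k$ and $k{+}1$, so no Whitney-trick argument will cancel them against each other. Even if it did, you would be left with a fibration whose fibre is still the enlarged $\wtilde M$, not $M$, so the total space would not be $E$. The mechanism in \cite{A-S} is different: each new vanishing cycle $L_i'$ (resp.\ $L_i''$) is paired not with its partner sphere but with the Weinstein $n$-handle attached to the fibre along $\partial W_i$ (resp.\ $\partial W_i'$); the sphere is precisely that disc glued to the handle core, and this disc-plus-handle-plus-critical-point package is a \emph{stabilization} of the Lefschetz fibration, which leaves the total space unchanged up to deformation. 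To peel off these $2m$ stabilizations from the interleaved ordering $(L_1,L_1',L_1'',\dots,L_m,L_m',L_m'')$ one must first reorder the basis of vanishing paths, and those reorderings are Hurwitz moves acting by Dehn twists on the cycles---which is exactly why the paper's Remark flags Dehn twists and Hurwitz moves, neither of which appears in your sketch. The dimension hypothesis $2n+2\ge 6$ also enters earlier than you indicate: it is already needed for the recombination construction in the fibre (e.g.\ the splitting of $\partial W_i\cong S^{n-1}$ into pieces $U_\pm$ with distinct Euler characteristics), not only for a handle-cancellation step in the total space.
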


Readers interested in the proof may consult \cite{A-S}.

\begin{remark}The proof involves a number of clever applications and complicated techniques, such as using wrapped Floer cohomology, Dehn twists, and Hurwitz moves to reduce to simpler cases. This construction, credited to Abouzaid-Seidel \cite{A-S}, is known as homologous recombination. The name comes from its biology term, a type of genetic recombination that produces new nucelotide sequences from similar but not identical chromosomes. \end{remark}

\section{Symplectic Mapping Torus Reconstruction}
Before proving the main result, we will need to introduce the symplectic mapping torus to invoke another property we use in the proof.

Let $M^{2n}$ be a Liouville domain and $\phi: M \to M$ a diffeomorphism such that $\phi$ is the identity near $\partial M$ and $\phi^*\theta_M - \theta_M$ is the derivative of a function $f$ vanishing near the $\partial M$. Without loss of generality, we assume $|f(x)| < 1$ everywhere; otherwise, we rescale $\omega$ accordingly.
\begin{definition} The symplectic mapping torus $M_\phi$ is the manifold with corners
\begin{equation} \label{eq:4.1}
M_\phi:=\frac{[-1,1] \times \bR \times M}{(s,t,x) \sim (s,t-1,\phi(x))}. \end{equation}
It carries the $1$-form
\begin{equation} \label{eq:4.2}
\theta_{M_\phi}:=\theta_M + s\, dt + d(t\, f) = \theta_M + (s + k) dt + t \, df,
\end{equation}
which is invariant under the equivalence relation, and $\omega_{M_\phi}:=d(\theta_{M_\phi})=\omega_M + ds \wedge dt$. \end{definition}
We aim to construct a Liouville domain from the symplectic mapping torus $M_\phi.$ The Liouville vector field $Z_{M_\phi}$ is
\begin{equation} \label{4.3} Z_{M_\phi}:=Z_M + (s + f) \partial_s - t X_f,\end{equation}
where $X_f$ is the Hamiltonian vector field of $f$, i.e., $\omega_M(\cdot,X_f) = dX_f$. Note that this Liouville vector field points strictly outwards along the boundary face $\partial M$, and because $|f(x)|<1$ by assumption, $Z_{M_\phi}$ does the same on the boundary face $\{s = \pm 1\in [-1,1]\}$.

We now aim to round off the corners. By the collar neighborhood theorem (a variant of the tubular neighborhood theorem), there exists  a neighborhood $N \supset \partial M$ which admits a diffeomorphism $N \to [\frac{1}{2},1] \times \partial M$, denoting the first component of the map by $r: N \rightarrow [\frac{1}{2},1]$. Note that by construction, $\theta_M|_N$ agrees with the pullback of $r(\theta_M|\partial M)$. And without loss of generality, we assume $\phi = \mathrm{id}$ and $f = 0$ on $N$, since conjugating $\phi$ with the Liouville flow can always deform it. Now define
\begin{equation}
T := \{(s,t,x) \in [-1,1] \times \bR \times M \;:\; \text{either $x \notin N$ or $r(x) \leq \chi(s)$}\}/ \sim\},
\end{equation}
with same equivalence relation
\begin{equation}(s,t,x) \sim (s,t-1,\phi(x)),\end{equation}
and $\chi: [-1,1] \rightarrow [\frac{1}{2},1]$ is a function with the following properties:
\begin{itemize}
\item $\chi(\pm 1) = \frac{1}{2}$ and $\chi(0) = 1$,
\item $\chi$ is continuous everywhere, and smooth on $(-1,1)$,
\item $\chi' > 0$ on $(-1,0)$, $\chi' < 0$ on $(0,1)$, and $\chi''(0) < 0$,
\item Restricting $\chi$ to $[-1,0]$ or $[0,1]$, all derivatives of its inverse, $\chi^{-1}: [\frac{1}{2},1] \to [-1,0]$ or $[0,1]$ respectively, vanish at $t=\frac{1}{2}$.
\end{itemize}
The last-mentioned property of $\chi$ ensures that $\partial T$ is smooth. We equip $T$ with its canonical symplectic structure obtained from $M_\phi$. That is, we have
\begin{equation} \theta_T:= \theta_{M_\phi}|_T, \hspace{5mm}\omega_T:= \omega_{M_\phi}|_T, \hspace{5mm} Z_T:= Z_{M_\phi}|_T.\end{equation}
We note that $Z_T$ must point outwards along $\partial T$, so $T$ is our desired Liouville domain.

\begin{property} \label{pr:4.1} Let $\pi:T\to[-1,1]\times S^1$ be the projection onto the first two coordinates, $\pi(s,t,x)=(s,t).$ The summand $\SH^*(T)_0 \subset \SH^*(T)$ which corresponds to free loops lying in the kernel of $\pi_*: H_1(T) \rightarrow \bZ$ is nonzero and finite-dimensional in each degree.
\end{property}
Readers interested in the proof may consult \cite[Lemma 4.2]{A-S}, which uses wrapped Floer cohomology and closed Reeb orbit arguments.

For $n>1$, there is an isotropic embedded loop in $\partial T$ whose image under $\pi_*$ generates $H_1([-1,1] \times S^1)$ (see \cite{E-M}). By Property \ref{pr:2.5}, we can attach a Weinstein $2$-handle to that loop in order to form a larger Liouville domain $\wtilde{T}\supset T$.

\begin{lemma} \label{le:4.2}
Consider $\SH^*(\wtilde{T})$ with coefficients in a finite field of characteristic $p$. Then, the even degree part is a commutative ring in which the equation $x^p = x$ has a finite number $N \geq 2$ of solutions.
\end{lemma}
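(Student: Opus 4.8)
The plan is to reduce the statement to two structural facts about the commutative $\bK$-algebra $R:=\SH^{\mathrm{even}}(\wtilde{T})$ — that it is nonzero and finite-dimensional over $\bK$ — and then to finish with a short Frobenius argument. First I would verify that $R$ really is a commutative unital ring. As noted at the end of Section 3, $\SH^*$ is a unital graded-commutative ring, so homogeneous classes satisfy $ab=(-1)^{|a|\,|b|}ba$; on even-degree classes the sign is trivial, the product of two even classes is even, and the unit $1\in\SH^0$ is even. Hence $R$ is an honest commutative unital $\bK$-algebra. Since $\bK$ has characteristic $p$, the Frobenius map $F\colon R\to R$, $F(x)=x^p$, is additive (by $(x+y)^p=x^p+y^p$) and multiplicative, hence a ring endomorphism; its fixed-point set $S:=\{x\in R: x^p=x\}$ is a subring containing $\bF_p\sub\bK$, and $N=|S|$ is exactly what we must control.

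Granting the two facts, the lemma follows immediately. For the lower bound, in any nonzero commutative ring one has $0\neq 1$ with $0^p=0$ and $1^p=1$, so $\{0,1\}\sub S$ and $N\geq 2$; this uses only $R\neq 0$. For finiteness, if $R$ is finite-dimensional over the finite field $\bK$ then $R$ is a finite set, whence $S\sub R$ is finite and $N<\infty$. Thus the entire content is to show that $\SH^{\mathrm{even}}(\wtilde{T})$ is nonzero and finite-dimensional over $\bK$.

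This is where the handle attachment and Property \ref{pr:4.1} enter. The Weinstein $2$-handle was attached along an isotropic loop whose class generates $H_1([-1,1]\times S^1)$, i.e. the image of $\pi_*$. The effect of this surgery on symplectic cohomology is to fill that direction: in the decomposition of $\SH^*$ over free homotopy classes of loops, the classes lying outside $\ker\pi_*$ no longer contribute, and $\SH^*(\wtilde{T})$ is identified, compatibly with the ring structure, with the summand $\SH^*(T)_0$ of Property \ref{pr:4.1}. By that property $\SH^*(T)_0$ is nonzero and finite-dimensional in each degree, so in particular its even part is a nonzero finite-dimensional $\bK$-vector space. Transporting through the identification gives that $R=\SH^{\mathrm{even}}(\wtilde{T})$ is nonzero and finite-dimensional, which is precisely what the two facts require.

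The main obstacle is this last identification $\SH^*(\wtilde{T})\cong\SH^*(T)_0$: it is the genuine geometric input, a Viterbo/surgery-type computation of how the $2$-handle attachment acts on symplectic cohomology, and it is exactly the step that invokes wrapped Floer cohomology and closed Reeb-orbit counts (the kind of argument this paper cites to \cite{A-S} rather than reproducing). By contrast, once this and Property \ref{pr:4.1} are granted, the ring-theoretic steps — commutativity of the even part, the Frobenius fixed-point description of $S$, and the bounds $2\leq N<\infty$ — are routine.
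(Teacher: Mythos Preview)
Your reduction to ``$R$ nonzero and finite-dimensional over $\bK$'' is clean, and the Frobenius argument for $2\leq N<\infty$ is fine. The gap is in how you obtain those two facts. You assert that attaching the Weinstein $2$-handle along the loop generating $\mathrm{im}\,\pi_*$ kills the summands of $\SH^*(T)$ indexed by classes outside $\ker\pi_*$, yielding $\SH^*(\wtilde T)\cong\SH^*(T)_0$. That is not what happens: the $2$-handle is \emph{subcritical} (index $2<n+1$ since $n>1$), and by Cieliebak's theorem \cite{C} the Viterbo restriction $\SH^*(\wtilde T)\to\SH^*(T)$ is an isomorphism with the \emph{entire} $\SH^*(T)$, not with the summand $\SH^*(T)_0$. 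In particular your conclusion that $R=\SH^{\mathrm{even}}(\wtilde T)$ is finite-dimensional is unjustified (and in general false, since the summands over nonzero classes typically contribute).

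The paper's argument supplies exactly the missing step. After using \cite{C} to replace $\wtilde T$ by $T$, one exploits the additional grading of $\SH^*(T)$ by free loop classes, pushed forward to $\bZ$ via $\pi_*$. This grading is multiplicative, so in characteristic $p$ the Frobenius sends the $\bZ$-degree $m$ piece into $\bZ$-degree $pm$; an easy induction then shows that any solution of $x^p=x$ in $\SH^{\mathrm{even}}(T)$ must be concentrated in $\bZ$-degree $0$, i.e.\ lie in $\SH^*(T)_0$. Now Property~\ref{pr:4.1} gives that $\SH^*(T)_0$ is finite-dimensional in each degree (finiteness of $N$) and nonzero (so $0\neq 1$, giving $N\geq 2$). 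In short: the handle does not throw away the unwanted summands; the equation $x^p=x$ does.
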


\begin{proof}
Any solution of $x^p = x$ in the even part of $\SH^*(T)$ must be contained in the degree zero part of the subspace $\SH^*(T)_0$. Property \ref{pr:4.1} states that this part is finite-dimensional, so there must only be finitely many such solutions. By Property \ref{pr:4.1}, $\SH^*(T)_0$ is nonzero, so there are at least two solutions: $0$ and $1$. Since this is for $n>1$, our handle attachment is subcritical (all critical points have index $< n$). And by \cite{C}, the Viterbo restriction map $\SH^*(\wtilde{T}) \rightarrow \SH^*(T)$ must be an isomorphism. Thus, the result must be true for $\wtilde{T}$.
\end{proof}

\section{Non-standard Symplectic Structures}
\begin{theorem} \label{th:4.4} Let $M$ be a Liouville domain of dimension $n\geq6$. Then there exists a sequence of finite type Liouville manifolds $\{\wtilde{M}_k\}_{k\in\bN}$ such that for each $k\in\bN$,\begin{itemize}
\item $\wtilde{M}_k$ is almost symplectomorphic to $M$,
\item $\wtilde{M}_k$ is not symplectomorphic to $\tilde{M}_j$ for $j\neq k$. \end{itemize}
\end{theorem}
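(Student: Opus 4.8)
The plan is to use the symplectic cohomology ring as the distinguishing invariant and thereby decouple the two bullet points. By Property \ref{pr:2.12}, $\SH^*$ is invariant under symplectomorphisms between finite type Liouville manifolds, so the second bullet will follow automatically once I produce, for each $k$, a finite type Liouville manifold $\wtilde{M}_k$ that is almost symplectomorphic to $M$ and whose symplectic cohomology ring (over a fixed finite field $\bK=\bF_p$) is non-isomorphic to that of $\wtilde{M}_j$ for $j\neq k$. In other words, the entire problem reduces to constructing almost symplectomorphic manifolds with controllably different $\SH^*$, after which the invariance of Property \ref{pr:2.12} does all the separating work.

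First I would fix a prime $p$ and recall from Lemma \ref{le:4.2} that the mapping torus building block $\wtilde{T}$ has, over $\bF_p$, an even-degree symplectic cohomology ring in which $x^p=x$ has a finite number $N\geq 2$ of solutions. This count is the key numerical invariant: for a finite-dimensional commutative $\bF_p$-algebra the solution set is étale over $\bF_p$, so $N=p^{r}$ where $r$ is the number of primitive idempotents, and $N$ is manifestly preserved by ring isomorphism. I would then realize $M$, up to almost symplectomorphism, as the total space of an exact Lefschetz fibration, so that copies of the block $\wtilde{T}$ can be incorporated by Weinstein handle attachments (Property \ref{pr:2.5}) and by the fibre/vanishing-cycle reconstruction of Section 2.

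The crux is to arrange, for each $k$, a finite type Liouville manifold $\wtilde{M}_k$ whose underlying almost-symplectic type is rigidly that of $M$ but whose $\SH^*$ ring records the number $k$ of attached blocks. This is exactly where homologous recombination (Theorem \ref{th:3.1}) performs the decoupling: applied to the bulk it makes that part's symplectic cohomology vanish and pins down the diffeomorphism type and almost-symplectic class independently of $k$, while the un-recombined blocks persist in $\SH^*$. A Künneth- or connect-sum-type computation should then express $\SH^*(\wtilde{M}_k;\bF_p)$ as a ring assembled from $k$ block contributions, in such a way that the number of solutions $N_k$ of $x^p=x$ takes infinitely many distinct values (e.g. strictly increasing in $k$). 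Invariance of this count under ring isomorphism then forces the rings, and hence the manifolds, to be pairwise non-symplectomorphic, while Theorem \ref{th:3.1} guarantees each $\wtilde{M}_k$ is almost symplectomorphic to $M$. The dimension hypothesis $n\geq 6$ is precisely what lets me invoke Theorem \ref{th:3.1} (total spaces of dimension $2n+2\geq 6$) and the subcritical range $n>1$ underlying the block construction.

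The main obstacle is this simultaneous control: holding the almost-symplectic type fixed equal to that of $M$ while forcing $\SH^*$ to vary with $k$. Establishing the relevant gluing formula for symplectic cohomology in the presence of the recombined bulk, and verifying that the resulting counts $N_k$ genuinely separate, rests on the wrapped Floer cohomology inputs behind Theorem \ref{th:3.1} and Lemma \ref{le:4.2}, together with the subcritical handle invariance of the Viterbo restriction map from \cite{C}. I would treat these deep inputs as black boxes, as the rest of the paper does, and concentrate the argument on assembling them into the strictly growing sequence $N_k$, from which the theorem is immediate.
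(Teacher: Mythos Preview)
Your proposal is correct and follows essentially the same route as the paper: kill the bulk via Theorem~\ref{th:3.1}, boundary-connect-sum with $k$ copies of the block $\wtilde{T}$, invoke Cieliebak's subcritical handle result \cite{C} to identify $\SH^*(\wtilde{M}_k)$ with $\bigoplus_{i=1}^{k}\SH^*(\wtilde{T})$, and then count solutions of $x^p=x$ using Lemma~\ref{le:4.2} and Property~\ref{pr:2.12}. The only sharpening worth making is that the connect-sum formula gives the count exactly as $N^k$, so ``strictly increasing'' is immediate and no further argument about the sequence $N_k$ is needed.
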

\begin{proof} By Theorem \ref{th:3.1}, we can construct a Liouville domain $\wtilde{M}$ from $M$, such that $\wtilde{M}$ is almost symplectomorphic to $M$ and $SH^*(\wtilde{M})=0$.

Now for $k\in\bN$, take the boundary connect sum of $\wtilde{M}$ and $k$ copies of the domain $U$ considered above, and enlarge that to a Liouville manifold $\wtilde{M}_k$. This is still almost symplectomorphic to $M$. Furthermore, by \cite{C}, the Viterbo restriction map
\begin{equation}
\SH^*(\wtilde{M}_k) \to \bigoplus_{i=1}^k \SH^*(\wtilde{T}).
\end{equation}
must be an isomorphism. Now let $\bK$ be a finite field of characteristic $p$. Then by Lemma \ref{le:4.2}, the even part of $\SH^*(\wtilde{T})$ has $N \geq 2$ solutions of $x^p = x$, so $\SH^*(\wtilde{M}_k)$ has $N^k$ solutions. Consequently, for $k\neq l\in\bN$,
\begin{equation}SH^*(\wtilde{M}_k)\neq SH^*(\wtilde{M}_l),\end{equation} so by Property \ref{pr:2.12}, $\wtilde{M}_k$ is not symplectomorphic to $\wtilde{M}_l.$
\end{proof}

\begin{remark} There is a similar theorem on Liouville manifolds in general, with an analogous argument proven in \cite{A-S}, though it uses more assumptions.\end{remark}

\begin{remark}
As a final note, while symplectic cohomology is a powerful tool in distinguishing symplectic structures, other such tools and invariants are quite necessary, as symplectic cohomology may fail to distinguish certain exotic structures from the standard one. As reference to such an example, one may refer to \cite{H}.\end{remark}

\end{document}